\documentclass[11pt]{amsart}
\usepackage{amssymb}
\usepackage{amsmath}
\usepackage{amscd}
\usepackage{verbatim}
\usepackage{amssymb,amsfonts,pstricks,epsf}
\usepackage{graphicx}
\usepackage{epsfig}
\usepackage{color}
\usepackage{stmaryrd}
\usepackage{mathrsfs}
\usepackage{marginnote}
\usepackage{geometry}
\usepackage{mathtools}
 \geometry{
 left=1in, right=1in,
 top=1in,
 }

\newtheorem{theorem}{Theorem}

\newtheorem{corollary}[theorem]{Corollary}
\newtheorem{remark}[theorem]{Remark}

\newenvironment{customthm}[1]
  {\innercustomthm}
  {\endinnercustomthm}

\newcommand{\R}{\mathbb R}

\newcommand{\dint}{\displaystyle\int}
\definecolor{plum}{rgb}{1.0, 0.0, 1.0}

\begin{document}

\title[Sums of eigenvalues of free plates]{Estimates for sums of eigenvalues of the free plate via the Fourier transform}

\author{Barbara Brandolini, Francesco Chiacchio, and Jeffrey J. Langford}

\address{Department of Mathematics, Bucknell University, Lewisburg, Pennsylvania 17837}

\email{jeffrey.langford@bucknell.edu}

\address{Dipartimento di Matematica e Applicazioni ``R. Caccioppoli'', Universit\`a degli Studi di Napoli Federico II, Napoli, Italy}

\email{brandolini@unina.it}
\email{francesco.chiacchio@unina.it}

\date{\today}

\begin{abstract}
Using the Fourier transform, we obtain upper bounds for sums of eigenvalues of the free plate. 
\end{abstract}

\keywords{sums of eigenvalues, free plate, bilaplace operator}

\subjclass[2010]{35P15, 35J40, 74K20}

\maketitle

\section{Introduction and main results}
For a bounded domain $\Omega \subset \mathbb{R}^n$ with smooth boundary, the frequencies and modes of vibration for a free membrane of shape $\Omega$ satisfy the Neumann eigenvalue problem
\begin{equation}\label{eqn:memprob}
\begin{cases}
-\Delta u=\mu u & \textup{in }\Omega,\\
\frac{\partial u}{\partial \nu}=0 & \textup{on }\partial \Omega,
\end{cases}
\end{equation}
where $\Delta$ is the Laplace operator and $\frac{\partial u}{\partial \nu}$ is the outer normal derivative. It is well known that the free membrane problem admits a spectrum of eigenvalues
\[
0=\mu_1(\Omega)<\mu_2(\Omega)\leq \mu_3(\Omega)\leq \cdots \to +\infty.
\]
Estimates for  the eigenvalues $\{\mu_j(\Omega)\}$ and for their sums  in terms of the geometry of $\Omega$ have been obtained by many authors (see  \cite{BCDL, BCT1, BCT2, D1,D2, FK, K1, K2, L, LS, LT}, for instance; see also \cite{HS, L, LY, M} and the references therein for analogous estimates for the fixed membrane and \cite{AB,AL,CW1, CW2, N,T,YY} for analogous estimates for the clamped plate).
For the purposes of this note, we simply recall the following estimate of Kr\"oger \cite{K1} for sums of eigenvalues:
\begin{equation}\label{ineq:Kroger1}
\sum_{j=1}^m\mu_j(\Omega)\leq (2\pi)^2\left(\frac{n}{n+2}\right)\left(\frac{1}{\omega_n|\Omega|}\right)^{\frac{2}{n}}m^{\frac{n+2}{n}},\quad m\geq 1,
\end{equation}
and also the consequential estimate for eigenvalues:
\begin{equation}\label{ineq:Kroger2}
\mu_{m+1}(\Omega)\leq (2\pi)^2 \left(\frac{n+2}{2\omega_n|\Omega|}\right)^{\frac{2}{n}}m^{\frac{2}{n}},\quad m\geq 0.
\end{equation}
Here  $|\Omega|$ denotes the volume of $\Omega$ and $\omega_n$ denotes the volume of the unit ball in $\mathbb{R}^n$.  

The goal of the present paper is to establish analogous estimates to \eqref{ineq:Kroger1} and \eqref{ineq:Kroger2} for the free plate problem.
With $\Omega$ as above, the frequencies and modes of vibration for a free plate of shape $\Omega$ are governed by the eigenvalue problem
\begin{equation}\label{eqn:freeplprob}
\begin{cases}
\Delta^2 u -\tau \Delta u=\Lambda u & \textup{in }\Omega,\\
\frac{\partial^2 u}{\partial \nu^2}=0 & \textup{on }\partial \Omega,\\
\tau \frac{\partial u}{\partial \nu}-\textup{div}_{\partial \Omega}\left(\textup{Proj}_{T_x(\partial \Omega)}[(D^2u)\nu]\right)-\frac{\partial \Delta u}{\partial \nu} =0 & \textup{on }\partial \Omega,
\end{cases}
\end{equation}
where $\Delta^2u=\Delta(\Delta u)$ is the bilaplace operator, $\tau\in \R$, $\textup{div}_{\partial \Omega}$ denotes the divergence operator for the surface $\partial \Omega$, $D^2u$ denotes the Hessian matrix, and $\textup{Proj}_{T_x(\partial \Omega)}$ denotes the orthogonal projection of a vector from $T_x\mathbb{R}^n$ onto the tangent space $T_x(\partial \Omega)$. 
  In this paper, we study problem \eqref{eqn:freeplprob} when the parameter $\tau\geq0$; in this case, the eigenvalue problem for the free plate exhibits a nonnegative spectrum (see \cite{C1,C2})
\begin{equation*}\label{EvaluesFP}
0= \Lambda_1(\Omega) \le \Lambda_2(\Omega)\le \Lambda_3(\Omega)\leq \cdots \to +\infty.
\end{equation*}
   We observe that constants are solutions to problem \eqref{eqn:freeplprob} with eigenvalue zero for any parameter $\tau$.  If $\tau=0$, the coordinate functions $x_1,\ldots,x_n$ are additional solutions with eigenvalue zero, and so the lowest eigenvalue is at least $(n+1)$-fold degenerate. When $\tau>0$, we have a \emph{free plate under tension} and here $\Lambda_2(\Omega)>0$ (see \cite{C1}).

Since problem \eqref{eqn:freeplprob} is the ``plate analogue'' of problem \eqref{eqn:memprob} (see Section \ref{sect:freebc} for further discussion), it is not surprising that the spectra of the two problems share similar properties. For instance, a classical result of Szeg\H{o} \cite{S} and Weinberger \cite{W} states that among all domains with fixed volume, the lowest nonzero Neumann eigenvalue $\mu_2(\Omega)$ is maximized by a ball. In a relatively recent and analogous result, Chasman has shown in \cite{C2} that among all domains with prescribed volume, the first nonzero eigenvalue $\Lambda_2(\Omega)$  for a free plate under tension is maximized by a ball. The results of our paper shed additional light on the connection between problems \eqref{eqn:memprob} and \eqref{eqn:freeplprob}. More precisely, we prove:

\begin{theorem}\label{thm:esumest}
Let $\Omega \subset \mathbb{R}^n$ be a smooth bounded domain with $\{\Lambda_j(\Omega)\}$ the eigenvalues of the free plate problem \eqref{eqn:freeplprob}.  If $\tau \geq 0$, then
\begin{equation*}
\sum_{j=1}^m \Lambda_j(\Omega) \leq (2\pi)^4\left(\frac{n}{n+4}\right)\left(\frac{1}{\omega_n|\Omega|}\right)^{\frac{4}{n}}m^{\frac{n+4}{4}}+\tau(2\pi)^2\left(\frac{n}{n+2}\right)\left(\frac{1}{\omega_n|\Omega|}\right)^{\frac{2}{n}}m^{\frac{n+2}{n}},\qquad m\geq 1.
%{\color{blue} \frac{4 \pi^2n}{\omega_n^{2/n}|\Omega|^{2/n}}\left[\frac{4 \pi^2}{n+4}\left(\frac{m}{\omega_n|\Omega|}\right)^{2/n}+\frac{\tau}{n+2}\right]m^{1+2/n}}
%, \qquad m\geq 1.
\end{equation*}
\end{theorem}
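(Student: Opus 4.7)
The plan is to extend Kröger's Fourier-transform method from the Laplacian to the free plate operator $\Delta^2-\tau\Delta$. Let $\{u_j\}_{j=1}^\infty$ denote an $L^2(\Omega)$-orthonormal sequence of eigenfunctions of \eqref{eqn:freeplprob} with eigenvalues $\Lambda_j=\Lambda_j(\Omega)$, and introduce the bilinear form
\[
B(u,v):=\int_\Omega D^2u:D^2v\,dx+\tau\int_\Omega\nabla u\cdot\nabla v\,dx
\]
on $H^2(\Omega)$. The natural boundary conditions appearing in \eqref{eqn:freeplprob} are precisely those arising from integration by parts, so that $B(u_j,v)=\Lambda_j\langle u_j,v\rangle_{L^2(\Omega)}$ for every $v\in H^2(\Omega)$ and in particular $B(u_j,u_k)=\Lambda_j\delta_{jk}$. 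Combined with the spectral theorem for the self-adjoint realization of $\Delta^2-\tau\Delta$, this yields completeness of $\{u_j\}$ both in $L^2(\Omega)$ and, in the form topology, in $H^2(\Omega)$.

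For each $\xi\in\mathbb{R}^n$, I would use the exponential test function $\phi_\xi(x)=e^{i\xi\cdot x}$ and consider the coefficients $a_j(\xi):=\langle\phi_\xi,u_j\rangle_{L^2(\Omega)}$, which equal $\overline{\hat u_j(\xi)}$ when $u_j$ is extended by zero to $\mathbb{R}^n$. Expanding $\phi_\xi$ in the eigenbasis and invoking the orthogonality relations with respect to $\langle\cdot,\cdot\rangle_{L^2(\Omega)}$ and $B$, together with the pointwise identities $|D^2\phi_\xi|^2=|\xi|^4$ and $|\nabla\phi_\xi|^2=|\xi|^2$, yields the two identities
\[
\sum_j|a_j(\xi)|^2=|\Omega|,\qquad\sum_j\Lambda_j|a_j(\xi)|^2=B(\phi_\xi,\phi_\xi)=|\Omega|\bigl(|\xi|^4+\tau|\xi|^2\bigr).
\]

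Next I would integrate both identities over the ball $B_R=\{|\xi|<R\}$ and set $c_j:=\int_{B_R}|a_j(\xi)|^2\,d\xi$. Plancherel gives the uniform bound $c_j\le\int_{\mathbb{R}^n}|\hat u_j|^2\,d\xi=(2\pi)^n$, while direct computation produces $\sum_j c_j=\omega_nR^n|\Omega|$ and $\sum_j\Lambda_jc_j=|\Omega|\bigl(n\omega_nR^{n+4}/(n+4)+\tau n\omega_nR^{n+2}/(n+2)\bigr)$. A short rearrangement lemma---if $0\le c_j\le C$, $\sum_jc_j\ge mC$, and $\{\Lambda_j\}$ is nondecreasing and nonnegative, then $C\sum_{j=1}^m\Lambda_j\le\sum_j\Lambda_jc_j$, proved by regrouping the difference as $\sum_{j\le m}\Lambda_j(c_j-C)+\sum_{j>m}\Lambda_jc_j\ge\Lambda_m\bigl(\sum_jc_j-mC\bigr)\ge 0$---then closes the argument: one chooses the smallest $R$ for which $\omega_nR^n|\Omega|\ge m(2\pi)^n$, namely $R=2\pi(m/(\omega_n|\Omega|))^{1/n}$, and substitutes into the formulas above.

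The main technical point to verify is that the expansion $\phi_\xi=\sum_ja_j(\xi)u_j$ converges in the topology of the form $B$, so that the identity $B(\phi_\xi,\phi_\xi)=\sum_j\Lambda_j|a_j(\xi)|^2$ is legitimate; this is a standard consequence of the self-adjointness of $\Delta^2-\tau\Delta$ under the boundary conditions in \eqref{eqn:freeplprob} and the compactness of its resolvent. Once this is in hand, the two independent contributions from $|\xi|^4$ and $\tau|\xi|^2$ produce exactly the two terms appearing in the theorem.
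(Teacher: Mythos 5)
Your proof is correct, and it reaches the same final bound by a route that is recognizably different from the paper's, even though both methods share the Fourier-transform/exponential test-function machinery. The paper follows Kr\"oger's original template: it projects $e^{ix\cdot z}$ onto the span of the first $m$ eigenfunctions, uses the orthogonal complement $\rho(z,y)=e^{iy\cdot z}-(2\pi)^{n/2}\widehat\Phi(z,y)$ as a trial function in the Rayleigh quotient for $\Lambda_{m+1}$, integrates the resulting inequality in $z$ over the ball $B_r$, and finishes with an algebraic lemma (Lemma A1 in the Appendix, itself from Li--Tang) tailored to a fraction of the form $\Lambda_{m+1}\leq(a-\sum\Lambda_jc_j)/(b-\sum c_j)$. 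You instead expand $e^{i\xi\cdot x}$ in the \emph{entire} eigenbasis, invoke the form-topology convergence of the spectral expansion to obtain the exact identities $\sum_j|a_j(\xi)|^2=|\Omega|$ and $\sum_j\Lambda_j|a_j(\xi)|^2=B(\phi_\xi,\phi_\xi)=|\Omega|(|\xi|^4+\tau|\xi|^2)$, integrate in $\xi$ over $B_R$, and then close with a direct Chebyshev/rearrangement inequality ($0\le c_j\le C$, $\sum c_j\ge mC$, $\Lambda_j$ nondecreasing $\Rightarrow C\sum_{j\le m}\Lambda_j\le\sum_j\Lambda_jc_j$). Your route never touches $\Lambda_{m+1}$ or the min-max principle, which makes the concluding step a one-line positivity argument, and it is closer in spirit to the Laptev/Li--Yau coherent-state approach than to Kr\"oger's. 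The trade-off is that you must justify the form-topology convergence of the eigenfunction expansion of $\phi_\xi|_\Omega\in H^2(\Omega)$ (standard for a self-adjoint operator with compact resolvent and form domain $H^2$, but it is an ingredient the paper sidesteps by only ever integrating by parts against finitely many eigenfunctions). Arithmetic check: substituting $R=2\pi(m/(\omega_n|\Omega|))^{1/n}$ into your bound $(2\pi)^n\sum_{j=1}^m\Lambda_j\le n\omega_n|\Omega|\bigl(R^{n+4}/(n+4)+\tau R^{n+2}/(n+2)\bigr)$ reproduces the stated estimate exactly, with the exponent on $m$ in the first term being $(n+4)/n$ (the exponent $(n+4)/4$ printed in the theorem statement appears to be a typographical slip, as the displayed formula \eqref{ineq:Kroger1} and the second term's exponent $(n+2)/n$ make clear).
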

As a consequence of Theorem \ref{thm:esumest}, we obtain the following eigenvalue estimates:

\begin{corollary}\label{cor:eest}
Let $\Omega$ and $\{\Lambda_j(\Omega)\}$ be as in Theorem \ref{thm:esumest}. If $\tau=0$, then
\begin{equation*}
\Lambda_{m+1}(\Omega)\leq (2\pi)^4\left(\frac{n+4}{4\omega_n|\Omega|}\right)^{\frac{4}{n}}m^{\frac{4}{n}},\qquad  m\geq 0,
\end{equation*}
while  when $\tau >0$, we have

$$
\Lambda_{m+1}(\Omega)\le \min_{r >  2\pi\left(\frac{m}{\omega_n|\Omega|}\right)^{\frac{1}{n}} }\frac{n\omega_n|\Omega| \left(\frac{r^{n+4}}{n+4}+\tau \frac{r^{n+2}}{n+2}\right)}{\omega_n|\Omega|r^n-m(2\pi)^n}, \qquad m \ge 0.
$$

\end{corollary}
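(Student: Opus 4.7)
The proof of the corollary runs along the same Fourier-analytic path as Theorem~\ref{thm:esumest}, with the eigenvalue bound emerging as the pre-optimized form of the Kr\"oger trial-function inequality before one integrates to obtain the sum bound. My plan is to use the plane waves $\phi_\xi(x) = e^{2\pi i x \cdot \xi}$ on $\Omega$, parameterized by $\xi \in \R^n$. Write $\widehat{u_i}(\xi) = \int_\Omega u_i(x) e^{-2\pi i x \cdot \xi}\, dx$ for the Fourier coefficients of an $L^2(\Omega)$-orthonormal sequence of eigenfunctions $u_1, u_2, \ldots$, and let $Q(\psi) = \int_\Omega |D^2\psi|^2 + \tau |\nabla \psi|^2\, dx$ be the Rayleigh energy for \eqref{eqn:freeplprob} (no boundary terms appear, since the boundary conditions are natural and the admissible class is all of $H^2(\Omega)$). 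Projecting $\phi_\xi$ off the span of $u_1, \ldots, u_m$ and applying the variational characterization of $\Lambda_{m+1}$, together with simultaneous $L^2$- and $Q$-orthogonality of the eigenfunctions, yields
\[
\Lambda_{m+1}\Big(|\Omega| - \sum_{i=1}^m |\widehat{u_i}(\xi)|^2\Big) \leq Q(\phi_\xi) - \sum_{i=1}^m \Lambda_i |\widehat{u_i}(\xi)|^2, \qquad \xi \in \R^n.
\]

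Next I would integrate this inequality over the ball $\{|\xi| \leq r/(2\pi)\}$, discard the nonnegative term $\sum_i \Lambda_i \int |\widehat{u_i}|^2$ from the right, and apply Parseval to estimate $\sum_{i=1}^m \int_{\R^n} |\widehat{u_i}|^2\, d\xi = m$. Since $Q(\phi_\xi) = |\Omega|\bigl((2\pi)^4|\xi|^4 + \tau(2\pi)^2|\xi|^2\bigr)$ and the radial integrals evaluate to $\int_{|\xi|\leq \rho} |\xi|^k\, d\xi = n\omega_n \rho^{n+k}/(n+k)$, a rescaling of the Fourier radius produces the pre-optimized inequality
\[
\Lambda_{m+1}\bigl(\omega_n|\Omega|r^n - m(2\pi)^n\bigr) \leq n\omega_n|\Omega|\Big(\frac{r^{n+4}}{n+4} + \tau \frac{r^{n+2}}{n+2}\Big),
\]
valid for every $r > 2\pi(m/(\omega_n|\Omega|))^{1/n}$; dividing through and taking the infimum over admissible $r$ gives the $\tau>0$ statement of the corollary.

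For $\tau = 0$ I would optimize in closed form. Setting the logarithmic $r$-derivative of the right-hand side equal to zero produces the critical value $r_*^n = (n+4)m(2\pi)^n/(4\omega_n|\Omega|)$, which automatically satisfies the admissibility threshold since $(n+4)/4 > 1$. At $r_*$, the denominator $\omega_n|\Omega|r_*^n - m(2\pi)^n$ collapses to $nm(2\pi)^n/4$, and the quotient simplifies to $r_*^4 = (2\pi)^4\bigl((n+4)/(4\omega_n|\Omega|)\bigr)^{4/n} m^{4/n}$, yielding the explicit bound. The only substantive step to verify is the simultaneous $L^2$- and $Q$-orthogonality that underlies the projection identity for $Q(\phi_\xi^\perp)$; beyond that, the argument is Parseval, a radial integration, and a one-variable calculus optimization.
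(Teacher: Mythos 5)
Your proposal is correct and follows essentially the same Fourier-analytic route as the paper: plane-wave trial functions, projection off $\operatorname{span}\{u_1,\ldots,u_m\}$ using simultaneous $L^2$- and $A$-orthogonality of the eigenfunctions, integration over a Fourier ball, Plancherel to bound $\int_{B}|\widehat{u}_i|^2\le 1$, and a one-variable optimization for the $\tau=0$ case. The only cosmetic difference is that you re-derive the pre-optimized inequality from scratch, whereas the paper reuses inequality \eqref{ineq:lambd} already established inside the proof of Theorem~\ref{thm:esumest}; you could also note, as the paper does, that $F(r)\to+\infty$ at both endpoints of the admissible range, so the infimum in the $\tau>0$ case is in fact an attained minimum.
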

The remainder of this note is divided into two sections. Section \ref{sect:freebc} presents a discussion of the boundary conditions of the free plate problem \eqref{eqn:freeplprob} while Section \ref{sect:mainres} presents proofs of the main results and further consequences.
\section{Free boundary conditions}\label{sect:freebc}
To better understand the boundary conditions appearing in the plate problem \eqref{eqn:freeplprob}, we return our attention to the membrane problem \eqref{eqn:memprob}. The bilinear form for the membrane problem is given by
\[
a(u,v)=\int_{\Omega}\nabla u \cdot \nabla v\,dx,\qquad u,v\in H^1(\Omega).
\]
%Here $H^1(\Omega)$ denotes the Sobolev space of square integrable functions with weak first-order partials that are also square integrable. For more on Sobolev spaces, see \cite{E}. 
To say that $u\in H^1(\Omega)$ is a weak solution to problem \eqref{eqn:memprob} means that
\begin{equation}\label{eqn:weaksolnmem}
a(u,v)=\mu \int_{\Omega}uv\,dx
\end{equation}
for all functions $v\in H^1(\Omega)$. In particular, if $u$ is a weak solution and $u,v\in C^{\infty}(\overline{\Omega})$, integrating by parts transforms equation \eqref{eqn:weaksolnmem} into
\begin{equation}\label{eqn:ibpmem}
-\int_{\Omega}(\Delta u)v\,dx+\int_{\partial \Omega}\frac{\partial u}{\partial \nu}v\,dS=\mu\int_{\Omega}uv\,dx.
\end{equation}
Since \eqref{eqn:ibpmem} holds for functions $v=0$ along the boundary $\partial \Omega$, we see that $-\Delta u=\mu u$ in $\Omega$ in the classical sense. Hence \eqref{eqn:ibpmem} becomes
\[
\int_{\partial \Omega}\frac{\partial u}{\partial \nu}v\,dS=0
\]
for all functions $v\in C^{\infty}(\overline{\Omega})$, and we likewise deduce that $\frac{\partial u}{\partial \nu}=0$ in the classical sense along the boundary $\partial \Omega$. The term ``free'' in problem \eqref{eqn:memprob} comes the weak formulation \eqref{eqn:weaksolnmem}, where functions in the space $H^1(\Omega)$ have no prescribed behavior on the boundary. The boundary condition $\frac{\partial u}{\partial \nu}=0$ arises naturally from the weak formulation of our eigenvalue problem.

The bilinear form associated to the free plate problem \eqref{eqn:freeplprob} is given analogously by
\begin{equation*}\label{eqn:plateform}
A(u,v)=\sum_{j,k=1}^n\int_{\Omega}u_{x_jx_k}v_{x_jx_k}\,dx +\tau  \int_\Omega \nabla u \cdot \nabla v\,dx,\qquad u,v\in H^2(\Omega).
\end{equation*}
We say that $u$ is a weak solution to problem \eqref{eqn:freeplprob} if
\begin{equation*}
A(u,v)=\Lambda \int_{\Omega}uv\,dx
\end{equation*}
for each $v\in H^2(\Omega)$. Thus, if $u$ is a weak solution and $u,v\in C^{\infty}(\overline{\Omega})$, integration by parts transforms the above equation into

\begin{eqnarray}
A(u,v)&=& \int_{\Omega}(\Delta^2u-\tau \Delta u)v\,dx +\int_{\partial \Omega}\frac{\partial^2u}{\partial \nu^2}\frac{\partial v}{\partial \nu}\,dS \label{eqn:weakformplate}
\\
&&\> +\int_{\partial \Omega}\left(\tau \dfrac{\partial u}{\partial \nu}-\textup{div}_{\partial \Omega}\left(\textup{Proj}_{T_x(\partial \Omega)}[(D^2u)\nu]\right)-\frac{\partial \Delta u}{\partial \nu}\right)v\,dS  \notag
\\
&=&\Lambda \int_{\Omega}uv\,dx . \notag
\end{eqnarray}
For the details of this calculation, see \cite{C1}. Taking $v\in C_c^{\infty}(\Omega)$ to be a test function, we see that $\Delta^2u-\tau \Delta u=\Lambda u$ in $\Omega$ in the classical sense. Thus, equation \eqref{eqn:weakformplate} becomes
\begin{equation}\label{eqn:bdryplate}
0=\int_{\partial \Omega}\frac{\partial^2u}{\partial \nu^2}\frac{\partial v}{\partial \nu}\,dS+\int_{\partial \Omega}\left(\tau \dfrac{\partial u}{\partial \nu}-\textup{div}_{\partial \Omega}\left(\textup{Proj}_{T_x(\partial \Omega)}[(D^2u)\nu]\right)-\frac{\partial \Delta u}{\partial \nu}\right)v\,dS.
\end{equation}
Observe that any smooth function $v\in C^{\infty}(\partial \Omega)$ can be extended to $C^{\infty}(\overline{\Omega})$ with $\frac{\partial v}{\partial \nu}=0$ along the boundary $\partial \Omega$. Such an extension can be constructed, for example, by first extending $v$ to be constant along the inner normal direction (for a small fixed distance) and then using a $C^{\infty}$ bump function to extend $v$ to the rest of $\overline{\Omega}$. This observation implies that each boundary integral in \eqref{eqn:bdryplate} vanishes separately, and arguing as before, we have that
\[
\frac{\partial^2u}{\partial \nu^2}=0 \quad \textup{and}\quad  \tau \dfrac{\partial u}{\partial \nu} -\textup{div}_{\partial \Omega}\left(\textup{Proj}_{T_x(\partial \Omega)}[(D^2u)\nu]\right)-\frac{\partial \Delta u}{\partial \nu}=0
\]
in the classical sense along $\partial \Omega$.

\section{Main results}\label{sect:mainres}
We begin this section with a proof of Theorem \ref{thm:esumest}.

\begin{proof}[Proof of Theorem \ref{thm:esumest}]
We use some of the ideas contained in \cite{K1}. Let $\phi_1,\ldots,\phi_m$ denote an orthonormal set of eigenfunctions for $\Lambda_1,\ldots,\Lambda_m$ and define
\[
\Phi(x,y)=\sum_{j=1}^m\phi_j(x)\phi_j(y),\qquad x,y\in \Omega.
\]
Let $\widehat{\Phi}(z,y)$ denote the Fourier transform of $\Phi$ in the variable $x$, so that
\[
\widehat{\Phi}(z,y)=\frac{1}{(2\pi)^{\frac{n}{2}}}\int_{\Omega}\Phi(x,y)e^{ix\cdot z}\,dx.
\]
Observe that
\[
(2\pi)^{\frac{n}{2}}\widehat{\Phi}(z,y)=\sum_{j=1}^m\phi_j(y)\int_{\Omega}\phi_j(x)e^{ix\cdot z}\,dx
\]
is the orthogonal projection of the function
\[
h_z(x)=e^{ix\cdot z}
\]
onto the subspace of $L^2(\Omega)$ spanned by $\phi_1,\ldots,\phi_m$. Using $\rho(z,y)=h_z(y)-(2\pi)^{\frac{n}{2}}\widehat\Phi(z,y)$ as a trial function in the Rayleigh quotient for $\Lambda_{m+1}$, we have that
\[
\Lambda_{m+1}(\Omega)\leq \frac{ \displaystyle \sum_{j,k=1}^n \int_{\Omega}\left|\rho(z,y)_{y_jy_k}\right|^2\,dy\,dz
 +\tau \sum_{j=1}^n \int_\Omega \left|\rho(z,y)_{y_j}\right|^2\,dy\,dz}{\dint_{\Omega}\left|\rho(z,y)\right|^2\,dy}.
\]
Multiplying both sides of the above inequality by the denominator and integrating over $B_r=\{z\in \R^n:|z|<r\}$, we see that
\begin{align}\label{eqn:RQforplate}
\Lambda_{m+1}(\Omega) &\leq 
 \inf_r \left\{\frac{  \displaystyle \sum_{j,k=1}^n\dint_{B_r}\int_{\Omega}\left|\rho(z,y)_{y_jy_k}\right|^2\,dy\,dz
 +\tau \sum_{j=1}^n \dint_{B_r}\int_\Omega \left|\rho(z,y)_{y_j}\right|^2\,dy\,dz
}{\dint_{B_r}\dint_{\Omega}\left|\rho(z,y)\right|^2\,dy\,dz}
\right\}
\\
&=\inf_r\left\{\frac{N}{D} \right\}, \notag
\end{align}
where the $\inf$ is taken over  $r>2\pi\left(\frac{m}{\omega_n|\Omega|}\right)^{\frac{1}{n}}$.

We first simplify the denominator $D$ in the formula above. We observe that
\[
D=I_1+I_2+I_3,
\]
where
\begin{align*}
I_1&=\int_{B_r}\int_{\Omega}|h_z(y)|^2\,dy\,dz,\\
I_2&=-2(2\pi)^{\frac{n}{2}}\textup{Re}\left\{\int_{B_r}\int_{\Omega}h_z(y)\overline{\widehat{\Phi}(z,y)}\,dy\,dz\right\},\\
I_3&=(2\pi)^n\int_{B_r}\int_{\Omega}|\widehat{\Phi}(z,y)|^2\,dy\,dz.
\end{align*}
We evaluate each integral separately. Since $|h_z(y)|=1$, we have
\[
I_1=\omega_n|\Omega|r^n.
\]
Noting
\begin{equation}\label{eqn:simpexp}
\widehat{\Phi}(z,y)=\sum_{j=1}^m\phi_j(y)\widehat{\phi}_j(z),
\end{equation} 
we have
\begin{align*}
I_2&=-2(2\pi)^{\frac{n}{2}}\textup{Re}\left\{\sum_{j=1}^m\int_{B_r}\int_{\Omega}e^{iy\cdot z}\phi_j(y)\overline{\widehat{\phi}_j(z)}\,dy\,dz\right\}\\
&=-2(2\pi)^n\sum_{j=1}^m\int_{B_r}|\widehat{\phi}_j(z)|^2\,dz.
\end{align*}
Invoking \eqref{eqn:simpexp} again, the final denominator term simplifies to
\begin{align*}
I_3&=(2\pi)^n\int_{B_r}\int_{\Omega}|\widehat{\Phi}(z,y)|^2\,dy\,dz\\
&=(2\pi)^n\sum_{j,l=1}^m\int_{B_r}\int_{\Omega}\phi_j(y)\phi_l(y)\widehat{\phi}_j(z)\overline{\widehat{\phi}_l(z)}\,dy\,dz\\
&=(2\pi)^n\sum_{j=1}^m\int_{B_r}|\widehat{\phi}_j(z)|^2\,dz.
\end{align*}
Thus,
\begin{equation}\label{eqn:D}
D=\omega_n|\Omega|r^n-(2\pi)^n\sum_{j=1}^m\int_{B_r}|\widehat{\phi}_j(z)|^2\,dz.
\end{equation}

We next turn our attention to the numerator of \eqref{eqn:RQforplate}. Observe that
\[
N=J_1+J_2+J_3,
\]
where
\begin{align*}
J_1&=\sum_{j,k=1}^n\int_{B_r}\int_{\Omega}|h_z(y)_{y_jy_k}|^2\,dy\,dz
 + \tau \sum_{j=1}^n \int_{B_r}\int_{\Omega} |h_z(y)_{y_j}|^2\,dy\,dz  ,\\
J_2&=-2(2\pi)^{\frac{n}{2}}\textup{Re}\left\{\sum_{j,k=1}^n\int_{B_r}\int_{\Omega}h_z(y)_{y_jy_k}\overline{\widehat{\Phi}(z,y)_{y_jy_k}}\,dy\,dz
+\tau \sum_{j=1}^n \int_{B_r}\int_\Omega h_z(y)_{y_j}\overline{\widehat{\Phi}(z,y)_{y_j}}\,dy\,dz\right\},\\
J_3&=(2\pi)^n\left\{\sum_{j,k=1}^n\int_{B_r}\int_{\Omega}\left|  \hat{\Phi}(z,y)_{y_jy_k}\right|^2\,dy\,dz
+\tau \sum_{j=1}^n \int_{B_r}\int_\Omega  \left| \hat{\Phi}(z,y)_{y_j}\right|^2\,dy\,dz\right\}.
\end{align*}
Since $|h_z(y)_{y_j}|=|z_j|$ and $|h_z(y)_{y_jy_k}|=|z_j||z_k|$, we have
\[
J_1=\int_{B_r}\int_{\Omega} \left(|z|^4+\tau |z|^2\right)\,dy\,dz=n\omega_n|\Omega| \left(\frac{r^{n+4}}{n+4} +\tau \frac{r^{n+2}}{n+2}\right).
\]
To compute $J_2$, we combine identity \eqref{eqn:simpexp}
with the integration by parts formula in \eqref{eqn:weakformplate} to deduce
\begin{align*}
J_2&=-2(2\pi)^{\frac{n}{2}}\textup{Re}\left\{\int_{B_r}\int_{\Omega}h_z(y)\overline{\Delta^2_y\widehat{\Phi}(z,y)}\,dy\,dz -\tau \int_{B_r}\int_\Omega h_z(y)\overline{\Delta_y\widehat{\Phi}(y,z)}\,dy\,dz\right\}\\
&=-2(2\pi)^{\frac{n}{2}}\textup{Re}\left\{\sum_{j=1}^m\Lambda_j\int_{B_r}\int_{\Omega}e^{iy\cdot z}\phi_j(y)\overline{\widehat{\phi}_j(z)}\,dy\,dz\right \}\\
&=-2(2\pi)^n\sum_{j=1}^m\Lambda_j\int_{B_r}|\widehat{\phi}_j(z)|^2\,dz.
\end{align*}
We finally compute $J_3$ again  using \eqref{eqn:weakformplate}:
\begin{align*}
J_3&=(2\pi)^n\sum_{j,k=1}^n\int_{B_r}\int_{\Omega}\left(\sum_{l_1=1}^m\phi_{l_1}(y)_{y_jy_k}\widehat{\phi}_{l_1}(z)\right)\left(\sum_{l_2=1}^m\phi_{l_2}(y)_{y_jy_k}\overline{\widehat{\phi}_{l_2}(z)}\right)\,dy\,dz\\
&\quad +(2\pi)^n\tau \sum_{j=1}^n\int_{B_r}\int_{\Omega}\left(\sum_{l_1=1}^m\phi_{l_1}(y)_{y_j}\widehat{\phi}_{l_1}(z)\right)\left(\sum_{l_2=1}^m\phi_{l_2}(y)_{y_j}\overline{\widehat{\phi}_{l_2}(z)}\right)\,dy\,dz
\\
&=(2\pi)^n\sum_{j,k=1}^n\sum_{l_1,l_2=1}^m\int_{B_r}\int_{\Omega}\phi_{l_1}(y)_{y_jy_k}\widehat{\phi}_{l_1}(z)\phi_{l_2}(y)_{y_jy_k}\overline{\widehat{\phi}_{l_2}(z)}\,dy\,dz
\\
&\quad  +(2\pi)^n\tau \sum_{j=1}^n\sum_{l_1,l_2=1}^m\int_{B_r}\int_{\Omega}\phi_{l_1}(y)_{y_j}\widehat{\phi}_{l_1}(z)\phi_{l_2}(y)_{y_j}\overline{\widehat{\phi}_{l_2}(z)}\,dy\,dz
\\
&=(2\pi)^n\sum_{l_1,l_2=1}^m\Lambda_{l_1}\int_{B_r}\int_{\Omega}\phi_{l_1}(y)\widehat{\phi}_{l_1}(z)\phi_{l_2}(y)\overline{\widehat{\phi}_{l_2}(z)}\,dy\,dz\\
&=(2\pi)^n\sum_{l_1=1}^m\Lambda_{l_1}\int_{B_r}|\widehat{\phi}_{l_1}(z)|^2\,dz.
\end{align*}
We conclude that the numerator in \eqref{eqn:RQforplate} simplifies to
\[
N=n\omega_n|\Omega|  \left(\frac{r^{n+4}}{n+4}+\tau \frac{r^{n+2}}{n+2}\right)-(2\pi)^n\sum_{j=1}^m\Lambda_j\int_{B_r}|\widehat{\phi}_j(z)|^2\,dz.
\]
Combining the expression above for $N$ with the expression for $D$ in \eqref{eqn:D}, we see that \eqref{eqn:RQforplate} becomes
\begin{equation}\label{ineq:lambd}
\displaystyle \Lambda_{m+1}(\Omega)\leq \inf_r \left \{ \dfrac{\displaystyle \frac{n\omega_n|\Omega|}{(2\pi)^n} \left(\frac{r^{n+4}}{n+4}+\tau \frac{r^{n+2}}{n+2}\right)- \sum_{j=1}^m\Lambda_j\int_{B_r}|\widehat{\phi}_j(z)|^2\,dz}{\displaystyle \frac{\omega_n|\Omega|r^n}{(2\pi)^n}- \sum_{j=1}^m\int_{B_r}|\widehat{\phi}_j(z)|^2\,dz} \right \},
\end{equation}
where we remind the reader that the $\inf$ is taken over $r>2\pi\left(\frac{m}{\omega_n|\Omega|}\right)^{\frac{1}{n}}$. By Plancherel's Theorem,
\begin{equation}\label{ineq:planch}
\int_{B_r}|\widehat{\phi}_j(z)|^2\,dz\leq 1
\end{equation}
for each $j$.  Moreover, since $\tau\geq0$, all the eigenvalues $\Lambda_j$ are nonnegative. Hence we may apply Lemma \ref{lem:appendix} in the Appendix to deduce
\[
\sum_{j=1}^m\Lambda_j(\Omega)\leq  \frac{n\omega_n|\Omega|}{(2\pi)^n} \left(\frac{r^{n+4}}{n+4}+\tau \frac{r^{n+2}}{n+2}\right),\qquad r>2\pi\left(\frac{m}{\omega_n|\Omega|}\right)^{\frac{1}{n}}.
\]
Letting $r \to 2\pi\left(\frac{m}{\omega_n|\Omega|}\right)^{\frac{1}{n}}$ gives the result.
\end{proof}

We next establish the estimate of Corollary \ref{cor:eest}.

\begin{proof}[Proof of Corollary \ref{cor:eest}]
We return our attention to the estimate of \eqref{ineq:lambd}. Combining with \eqref{ineq:planch} we deduce
\begin{equation}\label{ineq:newbdlam}
\Lambda_{m+1}(\Omega)\leq \frac{n\omega_n|\Omega| \left(\frac{r^{n+4}}{n+4}+\tau \frac{r^{n+2}}{n+2}\right)}{\omega_n|\Omega|r^n-m(2\pi)^n}=F(r), \qquad r>2\pi\left(\frac{m}{\omega_n|\Omega|}\right)^{\frac{1}{n}}.
\end{equation}
Since 
$$
\lim_{r \to 2\pi\left(\frac{m}{\omega_n|\Omega|}\right)^{\frac{1}{n}}}F(r)=\lim_{r \to +\infty} F(r)=+\infty,
$$
our   first claim immediately follows.

\noindent In the case $\tau=0$, it is easy to check that the derivative $F'(r)$ vanishes precisely when
\[
\left(\omega_n|\Omega|r^n-m(2\pi)^n\right)\left(n\omega_n|\Omega|r^{n+3}\right)-\left(n\omega_n|\Omega|\frac{r^{n+4}}{n+4}\right)\left(n\omega_n|\Omega|r^{n-1}\right)=0
\]
and solving this equation for $r$ gives
\[
r=2\pi\left(\frac{m(n+4)}{4\omega_n|\Omega|}\right)^{\frac{1}{n}}.
\]
Substituting this value of $r$ into \eqref{ineq:newbdlam} gives the result.
\end{proof}
\begin{remark}
 We make two observations when the parameter $\tau=0$. First, our proof of Corollary \ref{cor:eest} gives an alternative and elementary proof of Corollary $3.3$ from \cite{L} for the case $l=2$ without appealing to trace inequalities for convex functions of operators.   Second, if $\Lambda_{M+1}(\Omega)$ denotes the lowest nonzero free plate eigenvalue, then the estimate of Corollary \ref{cor:eest} shows
\[
C(n,|\Omega|)\sum_{m=M}^{\infty}\frac{1}{m^{\frac{4}{n}}} \leq \sum_{m=M}^{\infty}\frac{1}{\Lambda_{m+1}(\Omega)},
\]
where $C(n,|\Omega|)$ is a positive constant that depends on the dimension and volume of $\Omega$. Thus, the sum of the reciprocals of the  nonzero eigenvalues for the free plate problem diverges when the dimension $n$ is at least $4$.
\end{remark}

\section*{Appendix}
In this section we establish a lemma used in the proof of Theorem \ref{thm:esumest}. This lemma appears in \cite{LT}; we provide a proof so that our paper is self contained.
\begin{customthm}{A1}\label{lem:appendix}
Say $0\leq \Lambda_1\leq \Lambda_2\leq\cdots \leq \Lambda_{m+1}$ are such that
\begin{equation}\label{ineq:lemma}
\Lambda_{m+1}\leq \frac{a-\sum_{j=1}^m\Lambda_jc_j}{b-\sum_{j=1}^mc_j},
\end{equation}
where $a,b,c, c_j$ are positive numbers with $c_j\leq c$. If $b>mc$, then
\[
c\sum_{j=1}^m\Lambda_j \leq a.
\]
\end{customthm}

\begin{proof}
Inequality \eqref{ineq:lemma} becomes 
\[
\Lambda_{m+1}\left(b-\sum_{j=1}^mc_j\right)=a-\sum_{j=1}^m\Lambda_jc_j
\]
and rearranging terms we have
\[
\Lambda_{m+1}b-a=\sum_{j=1}^m(\Lambda_{m+1}-\Lambda_j)c_j\leq c\sum_{j=1}^m(\Lambda_{m+1}-\Lambda_j).
\]
Solving the above inequality for $c\sum_{j=1}^m\Lambda_j$ we see
\[
c\sum_{j=1}^m\Lambda_j\leq a+(mc-b)\Lambda_{m+1}.
\]
The result now follows from the assumption $b>mc$.
\end{proof}

\end{document}